\newtheorem{theorem}{Theorem}[section]
\newtheorem{proposition}[theorem]{Proposition}
\newtheorem{lemma}[theorem]{Lemma}
\theoremstyle{definition}
\newtheorem{definition}[theorem]{Definition}
\newtheorem{remark}[theorem]{Remark}
\theoremstyle{problem}
\newtheorem{problem}[theorem]{Problem}
\newcommand{\Aut}{\mathrm{Aut}}
\newcommand{\RR}{\mathbf{R}}
\newcommand{\ZZ}{\mathbf{Z}}
\newcommand{\NN}{\mathbf{N}}
\newcommand{\Cen}{\mathrm{Cent}}
\newcommand{\CAT}{\mathrm{CAT}}
\newcommand{\cat}{$\mathrm{CAT}(0)$\xspace}
\newcommand{\Min}{\mathrm{Min}}
\newcommand{\Ch}{\mathrm{Ch}}
\newcommand{\Stab}{\mathrm{Stab}}
\newcommand{\Fix}{\mathrm{Fix}}
\newcommand{\dist}{\operatorname{dist}}
\newcommand{\pr}{\mathrm{pr}}
\newcommand{\Eucl}{\mathrm{\bold{Eucl}}}
\newcommand{\Sph}{\mathrm{\bold{Sph}}}
\newcommand{\nEsph}{\mathrm{\bold{nEsph}}}
\newcommand{\nSph}{\mathrm{\bold{nSph}}}
\newcommand{\A}{\mathrm{\bold{A}}}
\newcommand{\ch}{\mathrm{\bold{c}}}
\newcommand{\eqstop}{\ensuremath{\, \text{.}}}
\def\og{\leavevmode\raise.3ex\hbox{$\scriptscriptstyle\langle\!\langle$~}}
\def\fg{\leavevmode\raise.3ex\hbox{~$\!\scriptscriptstyle\,\rangle\!\rangle$}}
\def\crr{\texttt{r}}
\def\cll{\texttt{l}}
\def\cLL{\texttt{L}}
\title{The flat closing problem for buildings}
\author{Corina Ciobotaru\thanks{Supported  by the FRIA; corina.ciobotaru@uclouvain.be}}
\date{First draft: November 5, 2013; Accepted: March 20, 2014}
\begin{document}

\maketitle

\begin{abstract}
Using the notion of a strongly regular hyperbolic automorphism of a locally finite Euclidean building, we prove that any (not necessarily discrete) closed, co-compact subgroup of the type-preserving automorphisms group of a locally finite  general non-spherical building contains a compact-by-$\ZZ^{d}$ subgroup, where $d$ is the dimension of a maximal flat.
\end{abstract}

\renewcommand{\thefootnote}{}
\footnotetext{\textit{MSC classification:} 57S25, 51E24, 20E42, 20F55.}
\footnotetext{\textit{Keywords:}  Groups acting on buildings, Gromov's flat closing problem.}

\newcounter{qcounter}

\section{Introduction}

In 1972, Prasad and Raghunathan~\cite{PR72} proved the following result (see \cite[Corollary~2.9, Lemma~1.15]{PR72}): Let $G$ be a semi-simple real Lie group of rank $r$ (which may admit compact factors). Let $\Gamma < G$ be a lattice. Then $\Gamma$ contains an abelian subgroup of rank $r$.

To obtain this result, they use in the first place the existence in $\Gamma$ of a so-called $\RR$--hyper-regular element (see Prasad--Raghunathan~\cite[Definition~1.1 and Theorem~2.5]{PR72}). Being also an $\RR$--regular element (see \cite[Remark~1.2]{PR72}, Steinberg~\cite{St65}), an $\RR$--hyper-regular $g \in G$ inherits the property that its $G$--centralizer $\Cen_{G}(g)$ contains a unique maximal $\RR$--split torus of $G$ of dimension the $\RR$--rank of $G$ (see \cite{St65} or \cite{PR72}). Using this, the last step of the Prasad--Raghunathan strategy is to show that $\Cen_{G}(g) / \Cen_{\Gamma}(g)$ is compact if $g$ is an  $\RR$--hyper-regular element (see \cite[proof of Lemma 1.15]{PR72}). This is obtained in the following way. If $\Gamma$ is a uniform lattice then one can use a Selberg's lemma (see \cite[Lemma~1.10]{PR72}). If not, then \cite[Theorem~1.14]{PR72} gives the desired result.

The above result of Prasad and Raghunathan can be related,  in the setting of $\CAT(0)$ spaces, to the following question of Gromov~\cite{Gro93} (also known as the \textbf{flat closing problem}) (see \cite[Section 6.$B_{3}$]{Gro93}). From another point of view, the flat closing problem is a converse to the Flat Torus Theorem.

First, let us recall the following basic notions from the setting of $\CAT(0)$ spaces.

\begin{definition}
\label{def:: flat-CAT}
Let $X$ be a proper $\CAT(0)$ space and $G$ be a not necessarily discrete locally compact group acting continuously and properly by isometries on $X$. By a \textbf{geometric flat} $F \subset X$ of dimension $d$ (or a \textbf{$\mathbf{d}$--flat}) we mean a closed convex subset of $X$ which is isometric to the Euclidean $d$--space and where $d\geq 2$. Moreover, we say that a subset $Y \subset X$ is \textbf{periodic} if $\Stab_{G}(Y)$ acts co-compactly on $Y$.

When $\gamma$  is a hyperbolic isometry of $X$ we denote
$$\Min(\gamma):=\{x \in X \; | \; \dist_{X}(x, \gamma(x))=|\gamma| \},$$ 
where $|\gamma|:= \inf\limits_{x \in X} \dist_{X}(x, \gamma(x))$ denotes the translation length of $\gamma$. 
\end{definition}

\begin{problem}(The flat closing problem)
\label{prob:flat_clos-prob}
Let $X$ be a proper $\CAT(0)$ space and $\Gamma$ be a discrete group acting continuously, properly and co-compactly by isometries on $X$. Does the existence of a $d$--flat $F \subset X$ imply that $\Gamma$ contains a copy of $\ZZ^{d}$?
\end{problem}

\begin{remark}
\label{rem:compact_by_abelian} In the hypotheses of Problem~\ref{prob:flat_clos-prob}, note that if $F$ is a periodic $d$--flat then indeed, by Bieberbach Theorem, $\Stab_{\Gamma}(F)$ contains a copy of $\ZZ^{d}$ and therefore $\Gamma$ contains a subgroup which is virtually $\ZZ^{d}$. Moreover, if we replace $\Gamma$ with a not necessarily discrete locally compact group $G$ acting continuously,  properly and co-compactly by isometries on $X$ and $F$ is a periodic $d$--flat with respect to the $G$--action then $G$ contains a compact--by--$\ZZ^{d}$ subgroup.
\end{remark}

To attack Gromov's Problem~\ref{prob:flat_clos-prob}, the natural strategy would be to construct periodic flats or more generally periodic subspaces of the form $Y=F \times C \subset X$, where $F$ is a $d$--flat and $C$ is a compact set. This could be done by showing that $\Gamma$ contains a hyperbolic element $\gamma$ which is `regular' (see Definition~\ref{def::reg-hyp}) and then to consider $Y:=\Min(\gamma)=F \times C$, where $F$ is a flat and $C$ is a compact set. To conclude from here the flat closing problem it would be enough to show that the centralizer of $\gamma$ in $\Gamma$, namely, $\Cen_{\Gamma}(\gamma) <\Stab_{\Gamma}(Y)$,  acts co-compactly on $Y$ and that $\Stab_{\Cen_{\Gamma}(\gamma)}(F) \leq\Stab_{\Gamma}(F)$ has co-compact action on $F$. Thus, $\Stab_{\Gamma}(F)$ would contain a copy of $\ZZ^{d}$ and the Remark~\ref{rem:compact_by_abelian} is applied.

Notice that the above strategy is analogous to the one used in the Prasad--Raghunathan result, where the $\RR$--split torus is replaced with a flat.  Moreover, this strategy, including the existence of `regular' elements, is successfully implemented for example in Caprace--Zadnik~\cite{CaZa}, where it is proved that any discrete group acting properly and co-compactly on a decomposable locally compact $\CAT(0)$ space, which admits in addition the geodesic extension property (i.e. every geodesic segment is contained in a bi-infinite geodesic line) contains virtually $\ZZ^{d}$, where $d$ is the number of indecomposable de Rham factors. Remark that in general, the dimension of a maximal flat of a \cat space is bigger than the number of the indecomposable de Rham factors.

This article proposes to answer further Gromov's Problem \ref{prob:flat_clos-prob} in the case when $G$ is a not necessarily discrete locally compact group acting continuously, properly and co-compactly by type-preserving automorphisms on a locally finite general non-spherical building. We stress here that we cannot apply directly the result of Caprace--Zadnik~\cite{CaZa}. Firstly, because in the usual Davis realization, buildings are $\CAT(0)$ spaces where in general the geodesic extension property is not fulfilled. Secondly, the group $G$ considered in this article is not necessarily discrete. Thirdly, we exploit two other ingredients that are missing in the context of Caprace--Zadnik~\cite{CaZa}: the case of Euclidean buildings and the reduction to products in the non-affine part. Still, the strategy is the same and uses the existence of `strongly regular hyperbolic automorphisms' (see Definition \ref{def::str-reg-hyp} below) acting on locally finite Euclidean buildings. We obtain:

 \begin{theorem}[See Theorem~\ref{thm:flat-clos-conj}]
\label{thm:flat-clos-conj_first}
Let $\Delta$ be a locally finite building of not-finite type $(W,S)$, but with $S$ being finite. Let $G$ be a closed, not necessarily discrete, type-preserving subgroup of $\Aut(\Delta)$, with co-compact action on $\Delta$. Then $G$ contains a compact--by--$\ZZ^{d}$ subgroup, where $d$ is the dimension of a maximal flat of $\Delta$.
\end{theorem}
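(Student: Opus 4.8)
The plan is to follow the Prasad–Raghunathan-style strategy sketched in the introduction, adapted to buildings. First I would reduce to the "affine" heart of the problem. A general non-spherical building $\Delta$ of type $(W,S)$ decomposes, after passing to the de Rham / irreducible-factor decomposition of the Davis complex, into a product of irreducible factors together with a Euclidean (affine) part. The maximal flats of $\Delta$ live in the Euclidean building factors, so the dimension $d$ of a maximal flat is governed by the affine part. Thus the first step is to isolate a locally finite Euclidean building $\Delta_{0}$ inside $\Delta$ (or a suitable $G$-invariant factor) on which the relevant $\ZZ^{d}$ will be produced, and to arrange that a finite-index or cocompact piece of $G$ preserves this factorization so that the construction carried out on the factor can be transported back to $G$.

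Next I would invoke the existence of a strongly regular hyperbolic automorphism $\gamma \in G$ acting on the Euclidean building, which is the key technical input (Definition~\ref{def::str-reg-hyp}). The point of strong regularity is exactly to guarantee that $\Min(\gamma)$ splits as a product $F \times C$ with $F$ a maximal flat of dimension $d$ and $C$ compact, mirroring the fact that an $\RR$-hyper-regular element has centralizer containing a unique maximal split torus. Having produced such a $\gamma$, I would set $Y := \Min(\gamma) = F \times C$ and study the action of the centralizer $\Cen_{G}(\gamma)$ on $Y$. Since $\gamma$ is type-preserving and $G$ acts cocompactly, the standard \cat machinery (the Flat Torus/Product Decomposition theorems, together with the fact that $\Min(\gamma)$ is $\Cen_{G}(\gamma)$-invariant and closed convex) should yield that $\Cen_{G}(\gamma)$ acts cocompactly on $Y$.

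The crucial and most delicate step — the analogue of the last step of Prasad–Raghunathan, namely showing $\Cen_{G}(g)/\Cen_{\Gamma}(g)$ is compact — is to promote the cocompact action of $\Cen_{G}(\gamma)$ on $Y = F \times C$ to a cocompact action of $\Stab_{\Cen_{G}(\gamma)}(F)$ on the flat $F$ itself. Here one must control how $\Cen_{G}(\gamma)$ permutes the flat and compact factors: after passing to the finite-index subgroup preserving the splitting $Y = F \times C$, the projection to $\mathrm{Isom}(F)$ has cocompact image, and the kernel of that projection acts on the compact factor $C$ and is therefore compact-by-(something acting on $F$). I expect this control of the factor-permutation and the extraction of the cocompact action on $F$ to be the main obstacle, because $G$ is not assumed discrete, so $\Stab_{G}(F)$ is only locally compact and I must argue that its image in $\mathrm{Isom}(F) = \RR^{d} \rtimes O(d)$ is a cocompact closed subgroup.

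Once cocompactness on the $d$-flat $F$ is established, I would conclude by applying Bieberbach's theorem (in its locally compact form) exactly as in Remark~\ref{rem:compact_by_abelian}: a closed cocompact subgroup of $\mathrm{Isom}(\RR^{d})$ is virtually a crystallographic group, so $\Stab_{G}(F)$ contains a compact-by-$\ZZ^{d}$ subgroup, and hence so does $G$. The overall shape of the argument is therefore: reduce to the Euclidean factor, produce a strongly regular hyperbolic $\gamma$, use $\Min(\gamma) = F \times C$ and cocompactness of $\Cen_{G}(\gamma)$, descend to a cocompact action on $F$, and finish with Bieberbach. The two genuinely building-specific ingredients I expect to carry the weight are the existence theorem for strongly regular hyperbolic automorphisms and the product reduction in the non-affine part, both flagged in the introduction as what is missing from Caprace–Zadnik.
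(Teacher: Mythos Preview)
Your endgame---produce a suitable hyperbolic $\gamma$, use Ruane to get $\Cen_G(\gamma)$ acting cocompactly on $\Min(\gamma)=F\times C$, descend to a cocompact action on the flat $F$, and finish with Bieberbach---is exactly the paper's, and those steps are fine.

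The genuine gap is in your reduction step. You claim that ``the maximal flats of $\Delta$ live in the Euclidean building factors, so the dimension $d$ of a maximal flat is governed by the affine part,'' and you then look for a strongly regular $\gamma$ acting on an isolated Euclidean building $\Delta_0$. This is wrong on two counts. First, even for the global decomposition $\Delta=\prod_i\Delta_i$, every non-Euclidean non-spherical factor contributes one flat dimension via a rank-one geodesic: the paper's element (Proposition~\ref{prop:ext-hyp-iso}) is constructed so that $\pr_i(\gamma)$ is strongly regular for $i\in\Eucl$ \emph{and} rank-one for $i\in\nEsph$, giving $\Min(\gamma)=\prod_{i\in\Eucl}\RR^{n_i}\times\prod_{j\in\nEsph}(\RR\times C_j)$; your plan drops the $\RR$-factors coming from $\nEsph$. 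Second, and more seriously, the maximal flat of $\Delta$ need not be visible in the de~Rham decomposition of the full $W$ at all. By Proposition~\ref{prop::flat_residue} it lives in some residue $\mathcal{R}$, with $d=\sum_{i\in\Eucl_{\mathcal{R}}}n_i+|\nEsph_{\mathcal{R}}|$ computed from the type of $\mathcal{R}$; an irreducible hyperbolic $W$ can have Euclidean parabolic subgroups of rank $\geq 2$, producing large flats that your global affine factor misses entirely. The paper first passes to $\mathcal{R}$ and uses Proposition~\ref{prop:compact_action} to retain cocompactness of $\Stab_G(\mathcal{R})$, and only then runs the construction.

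So the ``product reduction in the non-affine part'' is not about discarding non-affine factors; it is about (a) passing to the right residue and (b) harvesting a rank-one axis from each of its non-affine irreducible pieces. Without both ingredients your argument produces compact-by-$\ZZ^{d'}$ with $d'=\sum_{i\in\Eucl}n_i$, which is in general strictly smaller than the maximal flat dimension.
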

 
\section{Preliminaries}

We start by recalling briefly some basic definitions and results from Caprace--Ciobotaru~\cite{CaCi} on strongly regular hyperbolic automorphisms acting on locally finite Euclidean buildings, as they are used further in this paper. For some notation see also Definition \ref{def:: flat-CAT} from the Introduction.

\begin{definition}
\label{def::reg-hyp}
Let $X$ be a $\CAT(0)$ space and $\gamma$ be a hyperbolic isometry of $X$. We say that $\gamma$ is a \textbf{regular hyperbolic isometry} if $\Min(\gamma)$ is a bounded Hausdorff distance from a maximal flat of $X$. 
\end{definition}

Specified to the case of locally finite Euclidean buildings we have the following stronger definition introduced in Caprace--Ciobotaru~\cite{CaCi}. 

\begin{definition}
\label{def::str-reg-hyp}
Let $\Delta$ be a Euclidean building and $\gamma \in \Aut(\Delta)$ be a type-preserving automorphism. We say that  $\gamma$ is a \textbf{strongly regular hyperbolic automorphism} if $\gamma$ is a hyperbolic isometry and the two endpoints of one (and hence all) of its translation axes lie in the interior of two opposite chambers of the spherical building at infinity. In particular $\Min(\gamma)$ is an apartment of $\Delta$  and is uniquely determined. 

In addition, if $\ell$ is geodesic line of the Euclidean building $\Delta$, we say that $\ell$ is \textbf{strongly regular} if its endpoints lie in the interior of two opposite chambers of the spherical building at infinity of $\Delta$.
\end{definition}

\begin{definition}
Let $X$ be a \cat space. A geodesic line $\ell$ in X is said to have \textbf{rank-one} if it does not bound a flat half-plane. An isometry $\gamma \in \mathrm{Is}(X)$ is said to have \textbf{rank-one} if it is hyperbolic and if some (and hence any) of its axes has rank-one.
\end{definition}

Used in the sequel, we have the following two results from Caprace--Ciobotaru~\cite{CaCi}.

\begin{lemma}[See Lemma~2.6 from \cite{CaCi}]
\label{lem::existance_reg_element}
Let $(W,S)$ be a  Euclidean Coxeter system. Then $W$ contains strongly regular hyperbolic elements.
\end{lemma}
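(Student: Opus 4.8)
The plan is to reduce the statement to an elementary lattice-point question. First I would realize the Coxeter complex $\Sigma$ of $(W,S)$ as a Euclidean space $E$ with underlying vector space $V$, on which $W$ acts properly, cocompactly and type-preservingly by affine isometries. By the structure theory of Euclidean reflection groups, taking linear parts gives a finite reflection group $W_0 \le O(V)$ (the point group) whose kernel is the subgroup $T \le W$ of translations; since $W/T \cong W_0$ is finite and the $W$-action is cocompact, $T$ is a cocompact, hence full-rank, lattice in $V$. I would then record the dictionary between the geometry at infinity and the linear group $W_0$: the spherical building $\partial_\infty\Sigma$ is the spherical Coxeter complex of $W_0$, a boundary point lies in the interior of a chamber precisely when its direction is \emph{regular} (off every wall of $W_0$), and the antipodal map carries each chamber to its opposite.

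With this dictionary, the next step is to observe that it suffices to find a regular vector in $T$. Indeed, for $v \in V \setminus \{0\}$ the translation $\tau_v$ is hyperbolic with $\Min(\tau_v) = E$, every line of direction $v$ being an axis, and its two endpoints at infinity are the antipodal points $\pm[v]$; by the dictionary these lie in the interiors of opposite chambers exactly when $v$ is regular. For such a regular $v$ the element $\tau_v$ is therefore strongly regular hyperbolic in the sense of Definition~\ref{def::str-reg-hyp}, with $\Min(\tau_v) = \Sigma$ the (trivially unique) apartment. If in addition $v \in T$, then $\tau_v \in W$ and is type-preserving because $W$ acts type-preservingly on $\Sigma$. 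So everything comes down to exhibiting a regular lattice vector.

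For that last step I would use the full-rank property of $T$. The non-regular vectors form the union of the finitely many walls of $W_0$, a closed set with empty interior, while an open Weyl chamber $C$ is a full-dimensional open cone missing all walls. Since $C$ contains Euclidean balls of arbitrarily large radius and $T$ is cocompact, a ball inside $C$ whose radius exceeds the covering radius of $T$ must contain a point of $T$; any such $v \in T \cap C$ is regular, and $\tau_v$ is then the desired strongly regular hyperbolic element of $W$. In the reducible case the same argument applies factorwise, a vector being regular for a direct product of finite reflection groups if and only if each of its components is.

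I do not expect a serious obstacle here: the content lies entirely in the two structural inputs above, namely the identification of ``opposite chambers at infinity'' with ``regular direction'', and the fact that the genuine translations of $W$ form a full-rank lattice. The one point I would be careful about is distinguishing $T = W \cap \mathrm{Trans}(V)$ from the ambient translation group of $V$: it is cocompactness of the $W$-action that guarantees $T$ is itself full-rank, and hence rich enough to meet the open cone $C$. Everything else --- that $\tau_v$ is hyperbolic, that its axes point in direction $v$, and that translations lying in $W$ are type-preserving --- is immediate.
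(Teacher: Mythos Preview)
Your argument is correct. Note, however, that the paper does not itself prove this lemma: it is quoted verbatim from \cite{CaCi} (Lemma~2.6 there), so there is no in-paper proof to compare against. That said, your approach is exactly the natural one and is essentially what the cited reference does: identify the visual boundary of the Coxeter complex with the spherical Coxeter complex of the linear part $W_0$, translate ``endpoints in the interior of opposite chambers'' into ``direction is a regular vector'', and then observe that the translation lattice $T \lhd W$ has full rank and therefore meets any open Weyl chamber. The covering-radius argument you give for the last step is clean; one could equally invoke that a full-rank lattice is Zariski-dense, hence not contained in the finite union of reflecting hyperplanes. Your caveat about distinguishing $T = W \cap \mathrm{Trans}(V)$ from all translations, and using cocompactness of $W$ to get full rank, is exactly the right point to flag.
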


\begin{proposition}[See Proposition~2.9 from \cite{CaCi}]
\label{prop:TechSRH}
Let $X$ be a proper \cat space, $G \leq \mathrm{Is}(X)$ be any subgroup of isometries and $\rho \colon \RR \to X$ be a geodesic map. Assume there is an increasing sequence $\{t_n\}_{n \geq 0}$ of positive real numbers tending to infinity such that $\sup_n d(\rho(t_n), \rho(t_{n+1})) < \infty$ and that the set $\{\rho(t_n)\}_{n \geq 0}$ falls into finitely many $G$--orbits, each of which is moreover discrete.

Then there is an increasing sequence $\{f(n)\}_{n}$ of positive integers such that, for all $n> m > 0$, there is a hyperbolic isometry $h_{m, n} \in G$ which has a translation axis containing the geodesic segment $[\rho(t_{f(m)}), \rho(t_{f(n)})]$.

In addition, if the geodesic line $\rho(\RR)$ is the translation axis of a rank-one element of $\mathrm{Is}(X)$, then for each fixed $m>0$ there exists $N_{m}>0$ such that for every $n>N_{m}$ the isometry $h_{m,n}$ is a rank-one element. Moreover, if $X$ is a locally finite Euclidean building and the geodesic line $\rho(\RR)$ is strongly regular, then $h_{m, n}$ is a strongly regular hyperbolic automorphism. 
 
\end{proposition}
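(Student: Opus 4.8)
The plan is to produce the hyperbolic isometries by a pigeonhole argument that turns the discreteness of the orbits into an \emph{exact} matching of the geodesic germ at two far-apart sample points, and then to straighten the resulting broken path into a genuine axis. First I would reduce to a single orbit: since $\{\rho(t_n)\}$ meets only finitely many $G$--orbits, after passing to an infinite subsequence (and relabelling) I may assume every $p_n := \rho(t_n)$ lies in one orbit $O = G\cdot p_0$, and I fix $g_n \in G$ with $g_n(p_0) = p_n$. Writing $D := \sup_n \dist(p_n, p_{n+1}) < \infty$, the point $q_n := g_n^{-1}(p_{n+1})$ lies in $O \cap \overline{B}(p_0, D)$; because $X$ is proper and $O$ is discrete, this intersection is finite, so by the pigeonhole principle there is an infinite set of indices $f(1) < f(2) < \cdots$ along which $q_n$ takes a constant value $q$. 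For any two of these, say $m = f(i) < n = f(j)$, the element $h_{m,n} := g_n g_m^{-1} \in G$ then satisfies both $h_{m,n}(p_m) = p_n$ and $h_{m,n}(p_{m+1}) = g_n(q) = p_{n+1}$, so it carries the forward germ of the geodesic $\rho$ at $p_m$ onto the forward germ of $\rho$ at $p_n$.

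The second step is the straightening. I would consider the bi-infinite concatenation $\Lambda := \bigcup_{k \in \ZZ} h_{m,n}^{k}\big([p_m, p_n]\big)$, where $[p_m,p_n] = \rho([t_m,t_n])$. At the junction point $h_{m,n}^{k}(p_n) = h_{m,n}^{k+1}(p_m)$ the incoming direction is $h_{m,n}^{k}$ applied to the backward direction of $\rho$ at $p_n$, while the outgoing direction is $h_{m,n}^{k+1}$ applied to the forward direction of $\rho$ at $p_m$, which equals $h_{m,n}^{k}$ applied to the forward direction of $\rho$ at $p_n$; as the forward and backward directions of the geodesic $\rho$ at $p_n$ are opposite, the Alexandrov angle at each junction is $\pi$. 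Hence $\Lambda$ is a local geodesic, and therefore a geodesic line in the \cat space $X$; it is preserved by $h_{m,n}$, which translates it by $\dist(p_m,p_n) = t_n - t_m > 0$. Thus $h_{m,n}$ is hyperbolic with an axis $\Lambda$ containing $[\rho(t_{f(i)}), \rho(t_{f(j)})]$, which is the first assertion.

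For the rank-one refinement I would argue by contradiction using the standard fact that the axis of a rank-one isometry is a contracting geodesic: there is a constant $B>0$ such that the nearest-point projection to $\rho(\RR)$ of any subset that avoids $\rho(\RR)$ has diameter at most $B$. Fix $m$, and suppose $\Lambda_{m,n}$ bounds a flat half-plane $H_n$. Restricting $H_n$ over the boundary segment $[p_m,p_n] = \rho([t_m,t_n])$ yields a flat half-strip of width $t_n-t_m$ whose base is $\rho([t_m,t_n])$; the horizontal segment of this strip at a suitable positive height avoids $\rho(\RR)$ yet projects onto essentially all of $\rho([t_m,t_n])$, forcing $t_n - t_m \le B$. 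Hence as soon as $t_n - t_m > B$, that is $n > N_m$, the axis $\Lambda_{m,n}$ bounds no flat half-plane and $h_{m,n}$ is rank-one.

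Finally, in the building case I would use the type (slope) of a geodesic. Every geodesic of the Euclidean building $X$ has a well-defined type in the model closed Weyl chamber, which is constant along the geodesic and is already determined by any nondegenerate subsegment of it. Since $\Lambda_{m,n}$ contains the subsegment $[p_m,p_n]$ of $\rho$, it has the same type as $\rho$, and this type is regular, i.e.\ interior to the Weyl chamber, precisely because $\rho$ is strongly regular. A regular type forces both endpoints $\Lambda_{m,n}(+\infty)$ and $\Lambda_{m,n}(-\infty)$ to lie in the interiors of chambers of the spherical building at infinity; being the two ends of a geodesic line they are antipodal, hence those two chambers are opposite. Therefore $h_{m,n}$ is a strongly regular hyperbolic automorphism (which in particular re-proves that it is rank-one in this setting). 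The main obstacle is the first step: the ordinary \cat closing lemma would produce only approximate or quasi-axes, and it is the discreteness of the orbit, used through the finiteness of $O \cap \overline{B}(p_0,D)$, that upgrades a merely approximate recurrence of $\rho$ into the \emph{exact} germ-matching $h_{m,n}(p_{m+1}) = p_{n+1}$ needed to straighten $\Lambda$ into a genuine translation axis.
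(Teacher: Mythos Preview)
Your overall strategy is right, and for the only assertion the paper actually proves here (the rank-one clause; the existence of the $h_{m,n}$ and the strongly regular clause are quoted from \cite{CaCi}) your argument is in the same spirit as the paper's. Two technical slips should be fixed, though.

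First, the reduction to a single orbit is illegitimate as written: an infinite monochromatic subsequence of a finitely-coloured sequence need not have bounded gaps, so after relabelling you may well have $\sup_n d(p_n,p_{n+1})=\infty$, and then $O\cap\overline{B}(p_0,D)$ is no longer finite. The standard repair is to skip that reduction and pigeonhole on the pair $\big(g_n(p_n),\,g_n(p_{n+1})\big)$ directly: the first coordinate ranges over finitely many orbit representatives, and for each of these the second lies in a discrete orbit intersected with a fixed ball, hence in a finite set. On an infinite set of indices where this pair is constant one gets $h_{m,n}=g_n^{-1}g_m$ with $h_{m,n}(p_m)=p_n$ and $h_{m,n}(p_{m+1})=p_{n+1}$, and your straightening step then goes through unchanged.

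Second, the contracting property you invoke is misstated: it bounds the projection to $\rho(\RR)$ of \emph{balls} disjoint from $\rho(\RR)$, not of arbitrary disjoint subsets, so the horizontal segment does not feed into it directly. The paper handles the rank-one clause by a slightly different route that avoids this: assuming $h_{m,n_k}$ is not rank-one along a sequence $n_k\to\infty$, each axis bounds a flat half-plane whose boundary contains $[\rho(t_{f(m)}),\rho(t_{f(n_k)})]$; by properness one extracts a limiting flat half-plane bounded by the ray $[\rho(t_{f(m)}),\rho(\infty))$, and \emph{then} applies the ball-projection bound to get the contradiction. Your finite half-strip idea can be salvaged too, e.g.\ by replacing the horizontal segment with a ball of radius just below $h$ centred at height $h$ over the midpoint of $[p_m,p_n]$, but it needs that extra sentence. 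Your slope/type argument for the strongly regular case is exactly the intended one.
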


\begin{proof}
We give a proof only for the very last assertion of the proposition, this not being part of \cite[Proposition~2.9]{CaCi}.

Let $m$ be fixed. Suppose the contrary, namely, for every $k>0$ there exists $n_{k} \geq k$ such that $h_{m,n_k}$ is not a rank-one element. This means that the translation axis of $h_{m,n_k}$ containing the geodesic segment $[\rho(t_{f(m)}), \rho(t_{f(n_k)})]$ is contained in the boundary of a flat half-plane. Therefore, as $k \to \infty$ we obtain that the geodesic ray $[\rho(t_{f(m)}), \rho(\infty))$ is contained in the boundary of a flat half-plane as well. As the space $X$ is proper and the geodesic line $\rho(\RR)$ is the translation axis of a rank-one hyperbolic element, we obtain a contradiction with the fact that the diameter of the projection on $\rho(\RR)$ of every closed metric ball in $X$, which is moreover disjoint from $\rho(\RR)$, must be bounded above by a fixed constant. The conclusion follows.
\end{proof}

\section{The proof of the main theorem}
\label{sec::Gromov-problem}

Before proceeding to the proof of Theorem~\ref{thm:flat-clos-conj_first}, we recall some general facts about buildings and we fix some notation. Let $\Delta$ be a locally finite general building of type $(W,S)$, with $S$ finite. Fix from now on a chamber $\ch$ in $\Ch(\Delta)$. Let $W=W_{1} \times W_{2} \times \cdots \times W_{k}$ be the direct product decomposition of $W$ in irreducible Coxeter systems $(W_{i}, S_{i})$. Thus $S=S_{1} \sqcup S_{2} \sqcup \cdots \sqcup S_{k}$ is a disjoint union. Denote by $\Delta_{i}$ the $W_{i}$--residue in $\Delta$ containing the fixed chamber $\ch$. From Ronan~\cite[Theorem 3.10]{Ron89} we have that $\Delta \cong \Delta_{1} \times \cdots \times \Delta_{k}$. For what follows, we use the following notation:
 \[
\Eucl=\{i \in \{1,\cdots, k\} \;| \; (W_{i}, S_{i}) \text{ is Euclidean}\}, \text{ }\Sph=\{i \in \{1,\cdots, k\} \;| \; (W_{i}, S_{i}) \text{ is finite}\},
\]
\[
\nSph= \{1,\cdots, k\} \setminus \Sph \text{\ \ \ \ and \ \ \ } \nEsph=\{1,\cdots, k\} \setminus (\Eucl  \cup \Sph) \eqstop
\]
Accordingly, we use the notation $\Delta_{\A}:=\prod\limits_{i \in \A}\Delta_{i}$ and $W_{\A}:=\prod\limits_{i \in \A}W_{i}$, where $\A$ is one of the sets $\Eucl$, $\Sph$, $\nSph$ or $\nEsph$. Moreover, for every $ i \in \{1,\cdots, k\}$ we denote by $\pr_{i} \colon \Delta \to \Delta_{i}$ the projection map on $\Delta_{i}$ and by abuse of notation we write $\pr_{i}(\gamma)$ to represent a type-preserving element $\gamma \in \Aut(\Delta)$ acting on the factor $\Delta_{i}$.

The first step towards the main theorem is given by the next proposition which uses an argument of Hruska and Kleiner~\cite[Lemma 3.1.2]{HK05}.

\begin{proposition}
\label{prop:compact_action}
Let $\Delta$ be a locally finite building of not-finite type $(W,S)$, but with $S$ being finite. Let $G$ be a not necessarily discrete, type-preserving subgroup of $ \Aut(\Delta)$ acting co-compactly on $\Delta$ and let $\mathcal{R}$ be any residue in $\Delta$ containing the chamber $\ch$. Then $\Stab_{G}(\mathcal{R})$ acts co-compactly on $\mathcal{R}$. 
\end{proposition}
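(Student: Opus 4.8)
The plan is to run the Hruska--Kleiner pigeonhole argument in the Davis realization $X$ of $\Delta$, a proper \cat space on which $G$ acts by isometries (properness of $X$ uses that $\Delta$ is locally finite). The realization $Y:=X_{\mathcal{R}}$ of $\mathcal{R}$ is then a closed convex subspace, and since $\Delta$ is locally finite, co-compactness of $\Stab_{G}(\mathcal{R})$ on $\mathcal{R}$ is equivalent to co-compactness of the $\Stab_{G}(Y)$--action on $Y$; so it suffices to establish the latter. I fix a basepoint $y_{0}\in Y$ (say the barycentre of $\ch$) and, using co-compactness of the $G$--action on $X$, fix $D>0$ with $G\cdot \bar{B}(y_{0},D)=X$.

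The key geometric input I would isolate first is that the family of translates $\{gY : g\in G\}$ is \textbf{locally finite}: every bounded subset of $X$ meets only finitely many of them. Indeed, as $G$ is type-preserving, every translate $g\mathcal{R}$ is a residue of the same type $J\subseteq S$ as $\mathcal{R}$, and in a locally finite building only finitely many $J$--residues meet a given metric ball. To see this, note that such a ball is compact, hence meets only finitely many cells of $X$; each such cell is a face of only finitely many chambers (local finiteness), and each chamber lies in a \emph{unique} $J$--residue, so only finitely many $J$--residues, a fortiori finitely many translates $gY$, can meet the ball. This is the one place where local finiteness of $\Delta$ enters essentially.

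With these two facts the argument proceeds by contradiction. Suppose $\Stab_{G}(Y)$ does not act co-compactly on $Y$; then, as $Y$ is proper, there is a sequence $y_{n}\in Y$ with $\dist(y_{n},\Stab_{G}(Y)\cdot y_{0})\to\infty$. By co-compactness choose $g_{n}\in G$ with $\dist(g_{n}y_{0},y_{n})\le D$, so that $g_{n}^{-1}y_{n}\in\bar{B}(y_{0},D)\cap g_{n}^{-1}Y$. By local finiteness only finitely many distinct translates $g_{n}^{-1}Y$ meet $\bar{B}(y_{0},D)$, so after passing to an infinite subsequence I may assume $g_{n}^{-1}Y=g_{n_{0}}^{-1}Y$ is constant. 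Then $h_{n}:=g_{n_{0}}g_{n}^{-1}$ stabilises $Y$, and
\[
\dist(y_{n},h_{n}^{-1}y_{0})=\dist(h_{n}y_{n},y_{0})\le \dist(h_{n}y_{n},g_{n_{0}}y_{0})+\dist(g_{n_{0}}y_{0},y_{0})\le D+\dist(g_{n_{0}}y_{0},y_{0}),
\]
which is bounded independently of $n$. Since $h_{n}^{-1}y_{0}\in\Stab_{G}(Y)\cdot y_{0}$, this contradicts $\dist(y_{n},\Stab_{G}(Y)\cdot y_{0})\to\infty$, and the proposition follows.

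The genuine content, and the step I expect to be the main obstacle, is the local finiteness of the orbit of $\mathcal{R}$ together with the resulting pigeonhole: it is exactly this that lets one replace an arbitrary element $g_{n}\in G$ carrying $y_{0}$ near $y_{n}$ by an element $h_{n}$ that actually \emph{stabilises} the residue. The remaining metric bookkeeping, and the translation between the combinatorial (``finitely many chamber orbits'') and the metric notions of co-compactness, I expect to be routine given local finiteness of $\Delta$.
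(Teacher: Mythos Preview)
Your argument is correct and follows essentially the same Hruska--Kleiner pigeonhole as the paper: both hinge on the observation that, since $G$ is type-preserving and $\Delta$ locally finite, only finitely many $J$-residues (hence finitely many translates $g^{-1}\mathcal{R}$) meet a given compact set, which forces the relevant elements $g_i$ to fall into finitely many $\Stab_G(\mathcal{R})$-cosets. The only cosmetic difference is that the paper argues directly---choosing a compact fundamental domain $K$, extracting finitely many coset representatives $g_1,\dots,g_t$, and exhibiting $K':=\bigcup_j g_j(K)$ as a compact set with $\Stab_G(\mathcal{R})K'\supseteq\mathcal{R}$---whereas you recast the same finiteness as a contradiction via a diverging sequence.
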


\begin{proof}
Let us denote by $(W', S')$ the type of the residue $\mathcal{R}$, where $W' \leq W$ and $S' \subset S$. 
Take $K \subset \Delta$ be a compact fundamental domain corresponding to the action of $G$ and containing the chamber $\ch$. Let $(g_{i})_{i \in I}$ be a subset of $G$ such that $\mathcal{R} \subset \bigcup\limits_{i \in I} g_{i}(K)$. Because  $\mathcal{R}$ is a residue containing $\ch$ and $G$ is type-preserving, for every $i \in I$, $g_{i}^{-1}(\mathcal{R})$ is a residue of the same type as $\mathcal{R}$, containing the chamber $g_{i}^{-1}(\ch)$ and intersecting $K$.

Notice that a compact set $K$ of a (not necessarily locally finite) building always has a finite number of chambers. Therefore, $K$ intersects a finite number of $(W', S')$--type residues of $\Delta$. We conclude that there is a finite number of left cosets of the form $\Stab_{G}(\mathcal{R}) g_{i}$, with $i \in I$. Denote by $\{ g_{1}, \cdots, g_{t}\} \subset \{ g_{i} \}_{i \in I}$  the finite set of representatives of these left cosets. We obtain that $\mathcal{R}$ is covered by $\bigcup\limits_{j \in \{1,\cdots, t\}} \Stab_{G}(\mathcal{R}) g_{j}(K)$.

Because $g_{j}(K)$ is compact, let $K'$ be a compact in $\Delta$ such that $\bigcup\limits_{j=1}^{t}g_{j}(K) \subset K'$. Thus $\Stab_{G}(\mathcal{R})K'$ covers $\mathcal{R}$ and the conclusion follows.

\end{proof}

The second step is to find a hyperbolic element in $\Stab_{G}(\Delta_{\nSph})$. This is given by the following proposition. 

\begin{proposition}
\label{prop:ext-hyp-iso}
Let $\Delta$ be a locally finite building of not-finite type $(W,S)$, but with $S$ being finite. Let $G$ be a not necessarily discrete, type-preserving subgroup of $\Aut(\Delta)$ acting co-compactly on $\Delta$. Then there exists a hyperbolic element $\gamma \in \Stab_{G}(\Delta_{\nSph})$ such that $\pr_{i}(\gamma)$ is a strongly regular hyperbolic element if $i \in \Eucl$ and a rank-one isometry if $i \in \nEsph$.
\end{proposition}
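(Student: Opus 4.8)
The plan is to reduce everything to the non-spherical residue and then manufacture the desired element by feeding a well-chosen geodesic line into the technical Proposition~\ref{prop:TechSRH}. First I would record that $\Delta_{\nSph}=\prod_{i\in\nSph}\Delta_{i}$ is a residue of $\Delta$ containing $\ch$, so that by Proposition~\ref{prop:compact_action} the closed subgroup $H:=\Stab_{G}(\Delta_{\nSph})$ acts co-compactly on the proper \cat space $\Delta_{\nSph}$. Since $H$ is type-preserving it respects the canonical decomposition $\Delta_{\nSph}=\prod_{i\in\nSph}\Delta_{i}$, so each $\pr_{i}\colon H\to\Aut(\Delta_{i})$ is a well-defined homomorphism and it suffices to produce one hyperbolic $\gamma\in H$ whose projection to each factor has the required type. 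The reason to route this through Proposition~\ref{prop:TechSRH} is that the relevant directions are visible already in an apartment, hence can be built from the Weyl group, whereas the element realizing them must be found inside $H$, which need not contain those Weyl elements.

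Next I would build the direction. Fix, for each $i\in\nSph$, an apartment $A_{i}$ of $\Delta_{i}$ and a hyperbolic element $w_{i}$ of its Coxeter group: for $i\in\Eucl$ let $w_{i}$ be a translation along a regular vector, so that its axis $\ell_{i}$ is a strongly regular geodesic line (Lemma~\ref{lem::existance_reg_element}); for $i\in\nEsph$, where $(W_{i},S_{i})$ is irreducible, infinite and non-affine, let $w_{i}$ be a rank-one element, which exists in this type, so that $\ell_{i}$ is a rank-one axis. In the product apartment $A=\prod_{i}A_{i}$ of $\Delta_{\nSph}$ the element $w=(w_{i})_{i}$ is then hyperbolic, its translation axis $\rho$ being the diagonal geodesic whose projection to $\Delta_{i}$ traverses $\ell_{i}$ at a constant positive speed $\lambda_{i}$ with $\sum_{i}\lambda_{i}^{2}=1$. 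Setting $t_{n}=n\,|w|$, the points $\rho(t_{n})=w^{n}(\rho(0))$ satisfy $\dist(\rho(t_{n}),\rho(t_{n+1}))=|w|$, and since $H$ acts co-compactly while every $\rho(t_{n})$ occupies the same type-position, translating them into a compact fundamental domain—which meets only finitely many simplices of the locally finite building $\Delta_{\nSph}$—shows that they fall into finitely many $H$-orbits; each such orbit is discrete because orbits of a type-preserving group on a locally finite building are discrete. Thus the hypotheses of Proposition~\ref{prop:TechSRH} are met.

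Proposition~\ref{prop:TechSRH} then yields, for suitable $m<n$, a hyperbolic $\gamma:=h_{m,n}\in H$ whose axis $\alpha$ contains the segment $[\rho(t_{f(m)}),\rho(t_{f(n)})]$. Because a geodesic of a product has constant factor-speeds and $\alpha$ agrees with $\rho$ along this segment, the factor-speeds of $\alpha$ equal $\lambda_{i}>0$ throughout; hence each $\pr_{i}(\gamma)$ is hyperbolic with axis $\alpha_{i}:=\pr_{i}(\alpha)$ containing the segment $\sigma_{i}$ of $\ell_{i}$ that is the image of $[\rho(t_{f(m)}),\rho(t_{f(n)})]$. It remains to upgrade this to the right type in each factor, which is the crux. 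For $i\in\nEsph$ I would rerun, inside $\Delta_{i}$, the flat-half-plane argument from the proof of Proposition~\ref{prop:TechSRH}: were $\pr_{i}(\gamma)$ not rank-one, $\alpha_{i}$ would bound a flat half-plane, and letting $n\to\infty$ the ray of $\ell_{i}$ would too, contradicting that $\ell_{i}$ is a rank-one axis; so $\pr_{i}(\gamma)$ is rank-one once $n$ is large. For $i\in\Eucl$ I would run the argument of the last assertion of Proposition~\ref{prop:TechSRH} in the locally finite Euclidean building $\Delta_{i}$: since $\alpha_{i}$ contains the arbitrarily long strongly regular segment $\sigma_{i}$ of $\ell_{i}$, this forces $\pr_{i}(\gamma)$ to be strongly regular once $n$ is large. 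Taking $n$ beyond the finitely many factor thresholds at once produces a single $\gamma$ with all the required projection types. The main obstacle is exactly this upgrading step: passing from the controlled segment $\sigma_{i}$ to the global type of $\pr_{i}(\gamma)$ requires controlling the axis $\alpha_{i}$ beyond $\sigma_{i}$, where \cat geodesics may branch, so the properness and limiting arguments together with the freedom to enlarge $n$ are indispensable; one must also keep each $\lambda_{i}$ strictly positive so that no projection degenerates to an elliptic isometry.
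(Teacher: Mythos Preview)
Your proposal is correct and follows essentially the same route as the paper: reduce to $\Delta_{\nSph}$ via Proposition~\ref{prop:compact_action}, build a diagonal geodesic in a product apartment from strongly regular Weyl translations (Lemma~\ref{lem::existance_reg_element}) and rank-one Coxeter elements (Caprace--Fujiwara), feed it into Proposition~\ref{prop:TechSRH}, and then upgrade the projections factor by factor. If anything you are more explicit than the paper, which simply writes ``applying again Proposition~\ref{prop:TechSRH}'' for the last step, whereas you correctly unpack that the rank-one and strongly regular addenda must be rerun inside each $\Delta_{i}$ (the diagonal line $\rho$ itself being neither rank-one nor strongly regular in $\Delta_{\nSph}$); note also that in the Euclidean factors no largeness of $n$ is actually needed, since any axis sharing a segment with a strongly regular line is itself strongly regular.
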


\begin{proof}
First, remark that $G$ acts co-compactly on $\Delta_{\nSph}$. Take $i \in \Eucl$. By Lemma~\ref{lem::existance_reg_element}, let $\cll_{i}$ be a strongly regular geodesic line contained in some apartment of $\Delta_{i}$, constructed using a strongly regular hyperbolic element $\gamma_{i}$ of $W_i$. The line $\cll_{i}$ is thus contained in a unique apartment of $\Delta_{i}$. Denote by $\{v_{i,j}\}_{j \in \ZZ}$ a bi-infinite sequence of points in $\cll_{i}$ such that $\gamma_{i}(v_{i,j})=v_{i,j+1}$, for every $j \in \ZZ$. For example, the points $\{v_{i,j}\}_{j \in \ZZ} \subset \cll_{i}$ can be taken to be special vertices of $\Delta_{i}$, of the same type.

For $i \in \nEsph$ we have a similar construction. Following Caprace--Fujiwara~\cite[Proposition~4.5]{CaFu}, denote by $\crr_{i}$ a rank-one geodesic line given by a rank-one hyperbolic element $h_{i} \in W_i$. Denote by $\{t_{i,j}\}_{j \in \ZZ}$ a bi-infinite sequence of points of $\crr_{i}$ such that $h_{i}(t_{i,j})=t_{i,j+1}$, for every $j \in \ZZ$.

Because $\prod\limits_{i \in \Eucl}\cll_{i} \times \prod\limits_{i \in \nEsph}\crr_{i}$ is a flat of $\Delta_{\nSph}$, we consider in its interior the infinite geodesic line determined by the following sequence of points 
$$\{o_{j}:=\prod\limits_{i \in \Eucl}v_{i,j} \times \prod\limits_{i \in \nEsph}t_{i,j}\}_{j \in \ZZ} \subset \Delta_{\nSph} \eqstop$$ Denote the resulting geodesic line by $\cLL$ and observe that, by defining $h:=\prod\limits_{i \in \Eucl}\gamma_{i} \times \prod\limits_{i \in \nEsph}\gamma_{i} $, we have that $h \in W_{\nSph}$ and $h(o_{j})=o_{j+1}$, for every $j \in \ZZ$. 

We are now ready to proceed in finding the desired hyperbolic element in $G$. Apply Proposition~\ref{prop:TechSRH} to our geodesic line $\cLL$, to the sequence of points $(o_{j})_{j \in \NN}$ and to the $\Stab_{G}(\Delta_{\nSph})$--action on $\Delta_{\nSph}$ (which is co-compact). As we are working with a locally finite building, all hypotheses of Proposition~\ref{prop:TechSRH} are fulfilled. We obtain thus a sequence $\{f(n)\}_{n\geq 0}$ and a sequence of hyperbolic elements $\{\gamma_{m,n}\}_{0<m<n} \subset \Stab_{G}(\Delta_{\nSph})$ such that every $\gamma_{m,n}$ has a translation axis containing the geodesic segment $[o_{f(m)}, o_{f(n)}]$. By this construction we obtain that $\pr_{i}(\gamma_{m,n}(o_{f(m)}))=\pr_{i}(o_{f(n)})$, for every $i \in \nSph$ and every element $\gamma_{m,n}$. Applying again Proposition~\ref{prop:TechSRH}, there exists a hyperbolic element $\gamma_{m,n}$ such that $\pr_{i}(\gamma_{m,n})$ is a strongly regular hyperbolic element if $i \in \Eucl$ and a rank-one isometry if $i \in \nEsph$. The conclusion follows. 
\end{proof}

Before starting the proof of the main Theorem~\ref{thm:flat-clos-conj}, let us mention that the power $d$ of the `compact--by--$\ZZ^{d}$ subgroup' appearing in the conclusion of the theorem is maximal with respect to the $d$--flats of the building $\Delta$. The maximality is explained by the following result.

\begin{proposition}(See Caprace~\cite[Proposition~3.1]{Ca09})
\label{prop::flat_residue}
Let $F$ be a maximal $d$--flat of a locally finite general building $\Delta$.  Then there exists a residue $\mathcal{R} \subset \Delta$ of type $(W_{\mathcal{R}}, S_{\mathcal{R}})$ such that  $d = \sum\limits_{i \in \Eucl_{\mathcal{R}}} n_{i} +\vert \nEsph_{\mathcal{R}} \vert$, where $\Eucl_{\mathcal{R}}$ and $\nEsph_{\mathcal{R}}$ correspond to the residue $\mathcal{R}$. 
\end{proposition}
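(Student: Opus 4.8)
The plan is to reduce the computation of $d$ to the product decomposition $\Delta \cong \Delta_{1} \times \cdots \times \Delta_{k}$ and then to analyse each irreducible factor separately. First I would invoke the standard fact that in a product of proper \cat spaces a maximal flat splits as a product of the maximal flats of the factors; thus $F = \prod_{i} F_{i}$, where $F_{i} := \pr_{i}(F)$ is a maximal flat of $\Delta_{i}$, and $d = \sum_{i} \dim F_{i}$. This immediately disposes of the spherical factors, since for $i \in \Sph$ the factor $\Delta_{i}$ is bounded and contributes $\dim F_{i} = 0$. It therefore remains to treat a single irreducible factor that is either Euclidean or non-spherical non-Euclidean. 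For $i \in \Eucl$ the maximal flats of the Euclidean building $\Delta_{i}$ are exactly its apartments, so $\dim F_{i} = n_{i}$ and the relevant sub-residue is all of $\Delta_{i}$.

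The heart of the argument is the case $i \in \nEsph$, where I would prove the following dichotomy at the level of the underlying Coxeter system: if $(W_{i}, S_{i})$ is irreducible, non-spherical and non-Euclidean, then every flat of $\Delta_{i}$ of dimension $\geq 2$ lies at finite Hausdorff distance from the Davis realisation of a \emph{proper} residue of $\Delta_{i}$. Granting this, I would capture each $F_{i}$ by a minimal sub-residue $\mathcal{R}_{i} \subseteq \Delta_{i}$ (minimal with respect to inclusion of types among residues containing $F_{i}$ up to bounded distance) and set $\mathcal{R} := \prod_{i} \mathcal{R}_{i}$, a residue of $\Delta$. Minimality forces each spherical irreducible factor of $W_{\mathcal{R}}$ to be absent (otherwise $F$ would be at bounded distance from a residue of smaller type), each Euclidean irreducible factor to be spanned by its full apartment and thus contribute its dimension, and each non-spherical non-Euclidean irreducible factor of $W_{\mathcal{R}}$ to contribute a flat of dimension exactly $1$, namely a rank-one geodesic line. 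Summing the contributions over the irreducible factors of $W_{\mathcal{R}}$ then yields $d = \sum_{i \in \Eucl_{\mathcal{R}}} n_{i} + |\nEsph_{\mathcal{R}}|$.

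To establish the dichotomy I would pass to an apartment and work inside the Coxeter complex of $(W_{i}, S_{i})$, relating a top-dimensional flat to the reflection subgroup generated by the walls parallel to it. The decisive input is the structure theory tying a flat of dimension $\geq 2$, after replacing it by a parallel periodic flat, to a virtually abelian reflection subgroup that splits as a direct product of irreducible affine components; such a subgroup is commensurable to a standard parabolic $W_{T}$ with $T \subsetneq S_{i}$ whose irreducible factors are affine, so that the flat is captured by the proper residue of type $T$. Consequently no essential flat of dimension $\geq 2$ survives in a non-spherical non-Euclidean factor, while the $1$-dimensional contribution is guaranteed by the translation axis of a rank-one hyperbolic element furnished by Caprace--Fujiwara~\cite{CaFu}, exactly as used in the proof of Proposition~\ref{prop:ext-hyp-iso}.

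The main obstacle I anticipate is precisely this last structural step: since a maximal flat need not be periodic, the passage from $F_{i}$ to a periodic flat, then to an affine reflection subgroup, and finally to a proper parabolic must be carried out carefully, using properness and cocompactness of the action to produce a parallel periodic flat and the classification of irreducible affine Coxeter systems to pin down the residue. By contrast, the reverse inequality $d \geq \sum_{i \in \Eucl_{\mathcal{R}}} n_{i} + |\nEsph_{\mathcal{R}}|$ for an arbitrary residue $\mathcal{R}$ is comparatively routine: one constructs a flat of the prescribed dimension as the product of apartments in the Euclidean factors with translation axes of rank-one elements in the remaining factors, confirming that the value of $d$ produced above is indeed maximal among the residues of $\Delta$.
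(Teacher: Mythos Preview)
The paper does not give its own proof of this proposition; it is quoted verbatim as a citation of Caprace~\cite[Proposition~3.1]{Ca09}, with no argument supplied. There is therefore nothing in the present paper to compare your proposal against.

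That said, your outline is broadly in the spirit of the cited result: reduce to irreducible factors, dispose of spherical and Euclidean factors directly, and for a non-affine irreducible factor show that any flat of dimension $\geq 2$ is confined to a proper residue, then take a residue of minimal type. Two points deserve care. First, your phrase ``using properness and cocompactness of the action to produce a parallel periodic flat'' can only refer to the $W$--action on its own Davis complex, since no group $G$ appears in the statement; and before you can work in a single apartment you must justify that the given flat lies in one, which for general buildings in the Davis realisation is itself a nontrivial step. Second, the assertion that the reflection subgroup generated by the walls parallel to the flat is ``commensurable to a standard parabolic $W_T$'' is not the mechanism actually used: the argument in \cite{Ca09} proceeds instead through the \emph{parabolic closure} (every subgroup of $W$ admits a unique smallest parabolic containing it), together with the fact that in an irreducible non-affine infinite Coxeter group the parabolic closure of a free abelian subgroup of rank $\geq 2$ is proper. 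Modulo these adjustments your strategy is essentially the one in the reference.
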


Therefore, by Propositions~\ref{prop:compact_action} and~\ref{prop::flat_residue}, to answer Gromov's flat closing problem in the case of a locally finite general non-spherical building it is enough to prove the following theorem. 

\begin{theorem}
\label{thm:flat-clos-conj}
Let $\Delta$ be a locally finite building of not-finite type $(W,S)$, with $S$ being finite, and $G$ a closed, not necessarily discrete, type-preserving subgroup of $\Aut(\Delta)$ with co-compact action. Then $G$ contains a compact--by--$\ZZ^{d}$ subgroup, where $d:=\sum\limits_{i \in \Eucl} n_{i} +\vert \nEsph \vert$. 
\end{theorem}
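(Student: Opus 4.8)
The plan is to build the desired compact-by-$\ZZ^d$ subgroup inside $\Stab_G(\Delta_{\nSph})$ by combining the hyperbolic element produced in Proposition~\ref{prop:ext-hyp-iso} with the centralizer-cocompactness argument sketched in the Introduction. Let me think about how the pieces fit together.

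Let me identify what I have. Proposition~\ref{prop:ext-hyp-iso} gives a hyperbolic $\gamma \in \Stab_G(\Delta_{\nSph})$ whose projection to each Euclidean factor is strongly regular hyperbolic and whose projection to each $\nEsph$ factor is rank-one. I want to reach dimension $d = \sum_{i\in\Eucl} n_i + |\nEsph|$.

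The structure of $\Min(\gamma)$. For a product, $\Min(\gamma) = \prod_i \Min(\pr_i(\gamma))$. On each Euclidean factor, strong regularity means (by Definition~\ref{def::str-reg-hyp}) that $\Min(\pr_i(\gamma))$ is a single apartment, a flat of dimension $n_i$. On each $\nEsph$ factor, rank-one means $\Min(\pr_i(\gamma)) = \RR \times C_i$ with $C_i$ compact, contributing one flat dimension. So $\Min(\gamma) = F \times C$ where $F$ is a flat of dimension exactly $d$ and $C$ is compact. This is the $F \times C$ decomposition the Introduction promised.

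The key steps. First I would make the product decomposition of $\Min(\gamma)$ explicit and identify the flat $F$ of dimension $d$. Second, I would show $\Cen_G(\gamma)$ (or at least $\Cen_{\Stab_G(\Delta_{\nSph})}(\gamma)$) acts cocompactly on $\Min(\gamma)$: since $\gamma$ is hyperbolic, $\Min(\gamma)$ is $\Stab_G(\gamma)$-invariant and convex, and cocompactness of the $G$-action (via Proposition~\ref{prop:compact_action} applied to the relevant residue) should descend to cocompactness of the centralizer action on $\Min(\gamma)$. Third, I would pass from cocompactness on $\Min(\gamma) = F \times C$ to cocompactness on the flat factor $F$, showing $\Stab(F)$ acts cocompactly on $F$. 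Fourth, applying Remark~\ref{rem:compact_by_abelian} (the Bieberbach-type argument for the not-necessarily-discrete locally compact setting), the cocompact action on the $d$-flat $F$ yields a compact-by-$\ZZ^d$ subgroup of $G$, since $G$ is closed and therefore locally compact.

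The main obstacle. The hard part will be establishing that the centralizer of $\gamma$ acts cocompactly on $\Min(\gamma)$ in the non-discrete setting, where one cannot invoke the Selberg-lemma shortcut used by Prasad--Raghunathan for uniform lattices. I expect this to require exploiting the strong regularity: on each Euclidean factor, because $\Min(\pr_i(\gamma))$ is a \emph{unique} apartment pinned down by two opposite chambers at infinity, the stabilizer of $\gamma$ is forced to preserve that apartment, which rigidifies the centralizer considerably. Combined with the rank-one property on the $\nEsph$ factors (giving the product-with-compact structure), this should let me control how far the centralizer translates along $F$ and conclude cocompactness. A secondary subtlety is verifying that the flat $F \subset \Min(\gamma) \subset \Delta_{\nSph} \subset \Delta$ is genuinely a flat of $\Delta$ of the advertised dimension, so that the dimension count matches $d$ from Proposition~\ref{prop::flat_residue}; this follows from the product decomposition $\Delta \cong \Delta_1 \times \cdots \times \Delta_k$ but must be checked against the contribution of the spherical factors, which contribute nothing to the flat dimension.
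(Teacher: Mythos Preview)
Your plan matches the paper's proof essentially step for step: identify $\Min(\gamma)=F\times C$ with $F$ a $d$--flat, show $\Cen_G(\gamma)$ acts cocompactly on $\Min(\gamma)$, descend to cocompactness of $\Stab_{\Cen_G(\gamma)}(F)$ on $F$ via the Hruska--Kleiner argument, and conclude by Bieberbach using that $\Fix_G(F)$ is compact since $G$ is closed. The step you flag as the main obstacle---cocompactness of the centralizer on $\Min(\gamma)$ in the non-discrete setting---is dispatched in the paper by a one-line citation to Ruane~\cite[Theorem~3.2]{Rua01}, whose proof is simply noted to go through for non-discrete $G$; no bespoke argument exploiting strong regularity is needed at that stage.
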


\begin{proof}
Let $\gamma \in \Stab_{G}(\Delta_{\nSph})$ be a hyperbolic element given by Proposition~\ref{prop:ext-hyp-iso}. We have that $\Min(\gamma)=\prod_{i \in \Eucl} \RR^{n_{i}} \times \prod_{j \in \nEsph} (\RR\times C_{j})$, where $n_{i}$ is the Euclidean dimension of the corresponding building $\Delta_{i}$ and $C_{j}$ is a compact, convex subset of the corresponding building $\Delta_{j}$. Let $F:=\prod_{i \in \Eucl} \RR^{n_{i}} \times \prod_{i \in \nEsph} \RR \subset \Min(\gamma)$. By Ruane~\cite[Theorem 3.2]{Rua01}, whose proof works also for not necessarily discrete groups, we have that $\Cen_{G}(\gamma)$ stabilizes and acts co-compactly on $\Min({\gamma})$. As $\prod_{i \in \nEsph}C_{i}$ is compact, using the argument of Hruska--Kleiner~\cite[Lemma 3.1.2]{HK05}, recalled in the proof of Proposition~\ref{prop:compact_action}, and the fact that we are working with $\CAT(0)$ cellular complexes we obtain that $\Stab_{\Cen_{G}(\gamma)}(F)$ acts co-compactly on $F$. In particular, we obtain that the action of $\Stab_{G}(F)$ on $F$ is properly discontinuous and co-compact. Therefore, the group $\Stab_{G}(F)/\Fix_{G}(F)$ is virtually isomorphic with $\ZZ^{d}$, where $d:=\sum\limits_{i \in \Eucl} n_{i} +\vert \nEsph \vert$. Since $G$ is closed, the group $\Fix_{G}(F)$ is compact. Therefore, the group $G$ contains a compact--by--$\ZZ^{d}$ subgroup. This concludes the proof of the theorem.
\end{proof}

\subsection*{Acknowledgements} We would like to thank Pierre-Emmanuel Caprace for proposing this question, for his comments and useful discussions, Ga\v{s}per Zadnik for some useful explanations about $\CAT(0)$ spaces and the referee for his/her useful comments. The author is supported by the FRIA.

\begin{bibdiv}
\begin{biblist}

\bib{AB}{book}{
   author={Abramenko, Peter},
   author={Brown, Kenneth S.},
   title={Buildings},
   series={Graduate Texts in Mathematics},
   volume={248},
   note={Theory and applications},
   publisher={Springer},
   place={New York},
   date={2008},
}

\bib{Ca09}{article}{
   author={Caprace, P-E.},
   title={Buildings with isolated subspaces and relatively hyperbolic Coxeter groups},
   journal={Innov. Incidence Geom.},
   volume={10},
   date={2009},
   pages={15--31},
   issn={1781-6475},
   review={\MR{2665193 (2011f:20098)}},
}

\bib{CaCi}{unpublished}{
  author={Caprace, P-E.},
   author={Ciobotaru, C.},
   title={Gelfand pairs and strong transitivity for Euclidean buildings},
 note={To appear in `Ergodic Theory and Dynamical Systems'},
   doi={},
}

\bib{CaFu}{article}{
  author={Caprace, P-E.},
   author={Fujiwara, K.},
   title={Rank one isometries of buildings and quasi-morphisms of Kac-Moody groups},
  journal={Geom. Funct. Anal},
   volume={19},
   number={5},
   pages={1296-1319},
  date={2010},
}

\bib{CaZa}{unpublished}{
  author={Caprace, P-E.},
   author={Zadnik, G.},
   title={Regular elements in $\CAT(0)$ groups},
 note={Groups Geom. Dyn. (to appear)},
   doi={},
}

\bib{Gro93}{book}{
 author={Gromov, M.},
   title={Asymptotic invariants of infinite groups},
   conference={
      title={Geometric group theory, Vol.\ 2},
      address={Sussex},
      date={1991},
   },
   book={
      series={London Math. Soc. Lecture Note Ser.},
      volume={182},
      publisher={Cambridge Univ. Press},
      place={Cambridge},
   },
   date={1993},
   pages={1--295},
}

\bib{HK05}{article}{
  author={Hruska, C.},
  author={Kleiner, B.},
   title={Hadamard spaces with isolated flats},
   journal={Geometry and Toplogy},
   volume={9},
   pages={1501-1538},
  date={2005},
}

\bib{PR72}{article}{
   author={Prasad, Gopal},
   author={Raghunathan, M. S.},
   title={Cartan subgroups and lattices in semi-simple groups},
   journal={Ann. of Math. (2)},
   volume={96},
   date={1972},
   pages={296--317},
}

\bib{Ron89}{book}{
  author={Ronan, M.},
  title={Lectures on Buildings},
    publisher={Academic Press, INC. Harcourt Brace Jovanovich, Publishers},
  date={1989 },
}

\bib{Rua01}{article}{
  author={Ruane, K. E.},
  title={Dynamics of the action of a $\CAT(0)$ group on the boundary},
   journal={Geom. Dedicata},
   volume={84},
   number={1-3}
   pages={81-99},
   date={2001},
}

\bib{St65}{article}{
   author={Steinberg, Robert},
   title={Regular elements of semisimple algebraic groups},
   journal={Inst. Hautes \'Etudes Sci. Publ. Math.},
   number={25},
   date={1965},
   pages={49--80},
}

\end{biblist}
\end{bibdiv}

\end{document}